\newcommand{\R}{ \mbox{\rm I}\!\mbox{\rm R}}
\newtheorem{theorem}{Theorem}[section]
\newtheorem{lemma}[theorem]{Lemma}
\newtheorem{remark}[theorem]{Remark}
\newenvironment{proof}[1][Proof]{\noindent\textbf{#1.} }{\ \rule{0.5em}{0.5em}}
\begin{document}

\title{ Approximate solution to abstract differential equations with variable domain}
\author{T.Ju.Bohonova \thanks{National Aviation University of Ukraine,
 1, Komarov ave. , 03058 Kyiv, Ukraine
({\tt bohonoff@astral.kiev.ua}).}, I.P. Gavrilyuk\thanks{ Staatliche
Studienakademie Th\"uringen, Berufsakademie Eisenach, University of
Cooperative Education,  Am Wartenberg 2, D-99817 Eisenach, Germany
({\tt ipg@ba-eisenach.de}).}, V.L.Makarov and V.Vasylyk
\thanks{Institute of Mathematics of NAS of Ukraine,
 3 Tereshchenkivs'ka Str., Kyiv-4, 01601, Ukraine
({\tt makarov@imath.kiev.ua}, {\tt vasylyk@imath.kiev.ua}).}
 }

\date{\null}
\maketitle

\begin{abstract}
A new exponentially convergent algorithm is proposed for an abstract the
first order differential equation with unbounded operator coefficient
possessing a variable domain. The algorithm is based on a generalization of
the Duhamel integral for vector-valued functions. This technique translates
the initial problem to a system of integral equations. Then the system is
approximated with exponential accuracy. The theoretical results are
illustrated by examples associated with the heat transfer boundary value
problems.
\end{abstract}

\noindent \emph{AMS Subject Classification:} 65J10, 65M12, 65M15, 46N20,
46N40, 47N20, 47N40

\noindent\emph{Key Words:} First order differential equations in Banach
space, operator coefficient with a variable domain, Duhamel's integral,
operator exponential, exponentially convergent algorithms

\section{Introduction}

This paper is devoted to special class of differential equations, which
are associated with the first order differential equation in a Banach space $X$

\begin{equation}  \label{in1}
\frac{du(t)}{dt}+A(t)u(t)=f(t), \; u(0)=u_{0}.
\end{equation}
Here $t$ is a real variable, the unknown function $u(t)$ and the given function $f(t)$ take values in $X$, and $A(t)$ is a given function whose values are densely defined, closed linear operators in $X$ with domains $D(A,t)$ depending on the parameter $t.$ Equations of the type \eqref{in1} are called abstract differential equations with an unbounded operator coefficient possessing a variable domain and can be considered as metamodels of initial boundary value problems for parabolic equations with time-depended boundary conditions.

The variable domain of an operator in some cases can be described by a separate equation, then we have an abstract problem of the kind 
\begin{equation}
\begin{split}
& \frac{du(t)}{dt}=A(t)u(t),\;0\leq s\leq t\leq T, \\
& L(t)u(t)=\Phi (t)u(t)+f(t),\;0\leq s\leq t\leq T, \\
& u(s)=u_{0}
\end{split}
\label{in6}
\end{equation}
instead of (\ref{in1}). Here $L(t)$ and $\Phi (t)$ are some linear operators defined on the boundary of the spatial domain and the second equation represent an abstract model of the time-dependent boundary condition. An existence and uniqueness result for this problem was proved in \cite{filali}.

The literature concerning discretization of such problems in abstract setting is rather not voluminous (see e.g. \cite{palencia3}, where the Euler difference approximation of the first accuracy order for problem (\ref{in1}) with the time-dependent domain was considered, and the references therein). It is clear that the discretization (with respect to $t$) is more complicated then in the case of a $t$-independent domain $D(A)$ since the inclusion $y_{k}=y(t_{k})\in D(A,t_{k})$ of the approximate solution $y_{k}$ at each discretization point $t_{k}$ should be additionally checked and guaranteed. The using of Duhamel integral was proposed in \cite{bgmv} for the problem

\begin{equation}\label{i1-1}
\begin{split}
& \frac{du(t)}{dt}+A(t)u(t)=f(t), \\
& \partial _{1}u(t)+\partial _{0}(t)u(t)=g(t),\; \\
& u(0)=u_{0},
\end{split}
\end{equation}
where $u(t)$ is the unknown function with values in a Banach space $X,$ $f(t)$ is a given measurable function, $A(t):D(A)\in X$ is a densely defined, closed linear operator in $X$ with a time-independent domain $D(A),$  $g(t)$ is a given function with values in some other Banach space $Y$ and $\partial _{1},$ $\partial _{0}(t)$ are linear operators. Here $u:(0,T)\rightarrow D(A)\subset X,$ $f:(0,T)\rightarrow X$ from $L_{q}(0,T;X)$ with the norm $\Vert f\Vert =\left\{ \int_{0}^{T}\Vert  f\Vert_{X}^{q}dt \right\} ^{1/q}$, $g:(0,T)\rightarrow Y$ is from $L_{q}(0,T;Y),$ and $\partial _{1}:D(A)\rightarrow Y$ (independent of $t$!), $\partial_{0} (t): D(A)\rightarrow Y$ (can depend on $t$!). Duhamel-like technique allows to transform the problem \eqref{i1-1} to a system of integral equations possessing operator coefficients with $t$-independent domains which can be efficiently approximate. It was constructed a discretization of high accuracy order for the case when $A(t)$ is a constant operator (i.e. $A(t)\equiv A$)  for the problem \eqref{i1-1} in \cite{bgmv} using this approach. 

The second equation above is just the time-dependent boundary condition with appropriate operators $\partial _{1},\partial _{0}$ acting on the boundary of the spatial domain. For this reason we call this equation an abstract (time-dependent) boundary condition. Including the boundary condition into the definition of the operator coefficient in the first equation we get a problem of the type (\ref{in1}) with a variable domain.

In the present paper we consider the problem (\ref{i1-1}) and build a new algorithm for approximate solution which rejects a limitation on the structure of the operator $\partial _{0}(t)$ used in \cite{bgmv} and can be suitable for non-constant operator $A(t).$

The paper is organized as follows. In Section \ref{sect2} we transform problem (\ref{i1-1}) to a system of abstract boundary integral equations using the Duhamel-like integral. In the Section \ref{Numalg} we construct a numerical method using the Tchebychev interpolation for involved unknown functions  and the collocation. The main theorem of this section shows an almost (i.e. within to a polynomial factor) exponential convergence of the discretization for analytical input data. In Section \ref{numex} we represent some computational experiment for our algorithm.

\section{Duhamel-like technique for the first order differential equations
in Banach space\label{sect2}}

For the problem (\ref{i1-1}) let us choose a mesh $\omega _{K}=\{t_{l}=l\ast \tau ,\;l=1,...,K,\;\tau = \frac{T}{K}\}$ of $K$ various points on $[0,T].$ Then one can rewrite the problem (\ref{i1-1}) in the following form:

\begin{equation}
\begin{split}
& \frac{du_{l}(t)}{dt}+A(t)u_{l}(t)=f(t),\quad t\in \left( t_{l-1},t_{l}\right], \\
& \partial _{1}u_{l}(t)+\partial _{0}(t)u_{l}(t)=g(t), \\
& u_{l}(t_{l-1})=u_{l-1}(t_{l-1}), \\
& l=1,2,\ldots ,K,
\end{split}
\label{rozbint}
\end{equation}
and 
\begin{equation*}
u(t)=u_{l}(t),\quad t\in \left[ t_{l-1},t_{l}\right] .
\end{equation*}

We transform each interval $[t_{l-1},t_{l}]$ to the $[-1,1]$ by change of
variables
\begin{equation*}
t=\frac{\tau }{2}s+\frac{t_{l}+t_{l-1}}{2}=\frac{\tau }{2}(s+2l-1)=\psi
_{l}(s),
\end{equation*}
then we obtain
\begin{equation}\label{rozbint2}
\begin{split}
& \frac{du_{l}(\psi_{l}(s))}{ds}+ \frac{\tau}{2} A(\psi_{l}(s)) u_{l}(\psi_{l}(s)) = \frac{\tau}{2} f(\psi_{l}(s)), \quad s\in \left( -1,1\right] , \\
& \partial _{1}u_{l}(\psi_{l}(s))+\partial _{0}(\psi_{l}(s))u_{l}(\psi_{l}(s))=g(\psi_{l}(s)), \\
& u_{l}(\psi_{l}(-1))=u_{l-1}(\psi_{l-1}(1)), \\
& l=1,2,\ldots ,K.
\end{split}
\end{equation}

Let us introduce the following notations:
\[
\begin{split}
v_{l}(s)= u_{l}(\psi_{l}(s)),\quad f_{l}(s)=f(\psi_{l}(s)),\\
\partial _{0,l}= \partial _{0}(\psi_{l}(s)), \quad g_{l}(s)= g(\psi_{l}(s)). 
\end{split}
\]

Further let us chose a mesh on segment $[-1,1]$ with the Chebyshev-Gauss-Lobatto nodes 
$\omega_{N}=\{s_{k}=\cos {\frac{(N-k)\pi }{N}},\;k=0,...,N\}.$ It is well known that $\max_k\{\theta_k\}=\frac{\pi}{N},$ $\theta_k=s_k-s_{k-1}.$ The problem (\ref{rozbint2}) is equivalent to 

\begin{equation} \label{fs2}
\begin{split}
& \frac{dv_{l}}{ds}+ \frac{\tau}{2}A_{l,k}v_{l}=\frac{\tau}{2} [A_{l,k}-A_l(s)]v_{l}+ \frac{\tau}{2}f_l(s), \\
&\partial_1v_l(s) = \partial_{0,l}v_l+g_l(s),\; s \in [-1,1],\\
&v_l(-1)=v_{l-1}(1),
\end{split}
\end{equation} 
where 
\begin{equation*}
A_{l,k}=A_l(s_k).
\end{equation*}

On each subinterval $(s_{k-1},s_k]$ we define the operator
$A^{(2)}_{l,k}$ with $t$-independent domain by
\begin{equation} \label{i47}
\begin{split}
& D(A^{(2)}_{l,k})=\{u \in D(A):\; \partial_1 u=0 \},\\
&A^{(2)}_{l,k} u=A_{l,k} u \; \forall u \in D(A^{(2)}_{l,k})
\end{split}
\end{equation}
and the operator  $B_{l,k}:Y \to D(A)$   by
\begin{equation}\label{i47-10}
\begin{split}
&A_{l,k}(B_{l,k} y)=0, \\
&\partial_1B_{l,k} y=y.
\end{split}
\end{equation}
For all $s\in [-1,1]$ we define the operators
\begin{equation} \label{i47-11}
\begin{split}
&A^{(2)}(s)=A^{(2)}_{l,k}, \; s \in (s_{k-1},s_k],\\
&B_{l}(s)=B_{l,k}, \; s \in (s_{k-1},s_k], \; \forall k=1,...,N.\\
\end{split}
\end{equation}

Further, we accept the following hypotheses:

{\bf (B1)} We suppose the operator $A^{(2)}(s)$ to be strongly
positive, i.e. there exists a positive constant $M_R$ independent
of $s$ such that on the rays and outside a sector $\Sigma_\theta
=\{z \in \mathbb C: 0 \le arg(z)\le \theta, \theta \in
(0,\pi/2)\}$ the following resolvent estimate holds
\begin{equation} \label{i390r}
\|(zI-A^{(2)}(s))^{-1}\|\le \frac{M_R}{1+|z|}.
\end{equation}
This assumption implies that there exists  positive constants $c,
\; \kappa$ such that \cite{fujita}, p.103
\begin{equation} \label{i400}
\|[A^{(2)}(s)]^\kappa e^{-\lambda A^{(2)}(s)}\|\le c \lambda^{-\kappa}, \lambda>0,
\kappa \ge 0 .
\end{equation}

{\bf (B2)} There exists a real positive $\omega$ such that
\begin{equation} \label{i410}
\|e^{-\lambda A^{(2)}(s)}\|\le e^{- \omega \lambda} \quad \forall \lambda ,s \in [-1,1]
\end{equation}
(see \cite{pazy}, Corollary 3.8, p.12, for corresponding
assumptions on $A(s)$ ).

We also assume that the following conditions hold:

{\bf (B3)}
\begin{equation} \label{i420}
\|[A^{(2)}(t)-A^{(2)}(s)][A^{(2)}(t)]^{-\gamma}\| \le c |t-s|
\quad \forall t,s, \; 0\le \gamma \le 1;
\end{equation}

{\bf (B4)}
\begin{equation} \label{i430}
\|[A^{(2)}(t)]^\beta[A^{(2)}(s)]^{-\beta}-I\|\le c|t-s| \quad
\forall t,s \in [-1,1].
\end{equation}

{\bf (B5)} 
\begin{equation} \label{fs1+1}
\|\partial_0 \|\le c.
\end{equation}

{\bf (B6)} It holds that
\begin{equation}\label{fs1+2}
\left[ \int_{-1}^t
\|[A^{(2)}(\eta)]^{1+\gamma}e^{-A^{(2)}(\eta)(t-\lambda)}B(\eta)\|^p_{Y
\to X}d \lambda \right]^{1/p} \le c \; \forall \; t, \; \eta \in
[-1,1], \; 0 \le \gamma.
\end{equation}

Following \cite{bgmv} one can wright down using the Duhamel's technique 

\begin{equation}\label{fs2-1}
\begin{split}
v_l(s)&=e^{-A^{(2)}_{l,k} \frac{\tau}{2}(s-s_{k-1})}v_l(s_{k-1})+ \frac{\tau}{2} \int_{s_{k-1}}^s
e^{-A^{(2)}_{l,k}\frac{\tau}{2}(s-\lambda)} \left\{-[A_l(\lambda)-A_{l,k}]v_l(\lambda)+ f_l(\lambda)\right\} d \lambda\\
&+ \frac{\tau}{2} \int_{s_{k-1}}^s
A^{(2)}_{l,k} e^{-A^{(2)}_{l,k}\frac{\tau}{2}(s-\lambda)}B_{l,k}\{-\partial_{0,l}(\lambda) v_l(\lambda) + g(\lambda)\} d \lambda , \\
\partial_{0,l}(s) & v_l(s)=\partial_{0,l}(s) e^{-A^{(2)}_{l,k} \frac{\tau}{2}(s-s_{k-1})}v_l(s_{k-1}) \\ 
& + \partial_{0,l}(s) \frac{\tau}{2} \int_{s_{k-1}}^s
e^{-A^{(2)}_{l,k}\frac{\tau}{2}(s-\lambda)} \left\{-[A_l(\lambda)- A_{l,k}]v_l(\lambda)+ f_l(\lambda)\right\} d \lambda\\
&+ \partial_{0,l}(s) \frac{\tau}{2} \int_{s_{k-1}}^s
A^{(2)}_{l,k} e^{-A^{(2)}_{l,k}\frac{\tau}{2}(s-\lambda)}B_{l,k}\{-\partial_{0,l}(\lambda) v_l(\lambda) + g(\lambda)\} d \lambda , \\
&s \in [s_{k-1},s_k], \; k=1,...,N,\\
&v_l(-1)=v_{l-1}(1).
\end{split}
\end{equation}

It was proved in \cite{bgmv} that under the assumptions {\bf B1}- {\bf B6} the system \eqref{fs2-1} possesses a unique solution in $\cal Y.$

\section{Numerical algorithm\label{Numalg}}

We use the interpolation on the Chebyshev-Gauss-Lobatto nodes in order to construct a discrete approximation of (\ref{fs2}), (\ref{fs2-1}). Let 
\begin{equation}
\begin{split}
& P_{N}(s;v_{l})=P_{N}v_{l}=\sum_{j=0}^{N}v_{l}(s_{j})L_{j,N}(s), \\
& P_{N}(s;\partial _{0,l}v_{l})=P_{N}(\partial
_{0,l}v_{l})=\sum_{j=0}^{N}\partial _{0,l}(s_{j})v_{l}(s_{j})L_{j,N}(s),
\end{split}
\label{s4}
\end{equation}
be the interpolation polynomials for $v_{l}(s),$ $\partial _{0,l}(s)v_{l}(s)$
on the mesh $\omega _{N},$ $x=(x_{0},x_{1},...,x_{N}),$ $x_{i}\in X,$\ and $y=(y_{0},y_{1},...,y_{N}),$ $y_{i}\in Y$ given vectors and 
\begin{equation}
P_{N}(s;y)=P_{N}y=\sum_{j=0}^{N}y_{j}L_{j,N}(s)  \label{s5}
\end{equation}%
the polynomial that interpolates $y$, where 
\begin{equation*}
L_{j,N}=\frac{T_{N}^{\prime }(s)(1-s^{2})}{\frac{d}{ds}[(1-s^{2})T_{N}^{\prime }(s)]_{s=s_{j}}(s-s_{j})},j=0,...,N
\end{equation*}
are the Lagrange fundamental polynomials. Substituting $P_{N}(\eta ;x)$ for $v_{l}(\eta ),$ $x_{k}$ for $v_{l}(s_{k}),$ $P_{N}(\eta ;y)$ for $\partial_{0,l}(\eta )v_{l}(\eta ),$ $y_{k}$ for $\partial _{0,l}(s_{k})v_{l}(s_{k})$ and then collocating in the points $s=s_{k}$ in (\ref{fs2-1}) we arrive at
the following system of linear equations with respect to the unknowns $x_{k},$ $y_{k}:$

\begin{equation}\label{fs6}
\begin{split}
& x_{k}^{(l)}=\mathrm{e}^{-A^{(2)}_{l,k}\frac{\tau }{2}\theta_k}x_{k-1}^{(l)}+ \sum_{j=0}^{N}\alpha _{kj}x_{j}^{(l)}+
\sum_{j=0}^{N}\beta_{kj}y_{j}^{(l)} + \phi _{k}^{(l)}, \\
& y_{k}^{(l)}=\partial _{0,l}(s_{k})\left[ \mathrm{e}^{-A^{(2)}_{l,k}\frac{\tau }{2}\theta_k}x_{k-1}^{(l)}+ \sum_{j=0}^{N}\alpha _{kj}x_{j}^{(l)}+
\sum_{j=0}^{N}\beta_{kj}y_{j}^{(l)}+ \phi _{k}^{(l)} \right], \\
& k=1,...,N;\; x_{0}^{(l)}=x_{N}^{(l-1)}=\tilde{v}_{l-1}(1),~y_{0}^{(l)}=y_{N}^{(l-1)}=\partial_{0,l-1}(1)\tilde{v}_{l-1}(1)
\end{split}
\end{equation}
which represents our algorithm. Here we use the notations

\begin{equation}\label{fs7}
\begin{split}
\alpha _{kj}& = \frac{\tau}{2} \int_{s_{k-1}}^{s_k}
e^{-A^{(2)}_{l,k}\frac{\tau}{2}(s_k - \eta)} \left\{A_{l,k}-A_l(\eta)\right\} L_{j,N}(\eta)  d \eta ,\\
 \beta_{kj}& =-\frac{\tau }{2}\int_{s_{k-1}}^{s_{k}}A^{(2)}_{l,k}\mathrm{e}^{-A^{(2)}_{l,k}
\frac{\tau }{2}(s_{k}-\eta )}B_{l,k} L_{j,N}(\eta )d\eta , \\
\phi _{k}& =\frac{\tau }{2}\left( \int_{s_{k-1}}^{s_{k}}A^{(2)}_{l,k}\mathrm{e}
^{-A^{(2)}_{l,k}\frac{\tau }{2}(s_{k}-\eta )}B_{l,k} g_{l}(\eta )d\eta + \int_{s_{k-1}}^{s_{k}}
\mathrm{e}^{-A^{(2)}_{l,k}\frac{\tau }{2}(s_{k}-\eta )}f_{l}(\eta )d\eta \right) ,
\end{split}
\end{equation}
and suppose that we have an algorithm to compute these coefficients.

\begin{remark}
\label{ObchKoef}Under the assumption that $f(t),g(t)$ are polynomials the
calculation of the ope\-ra\-tors $\alpha _{kj}$ and the elements $\phi _{k}$
can be reduced to the calculation of integrals of the kind $%
I_{s}=\int_{t_{k-1}}^{t_{k}}\mathrm{e}^{-A_{k}^{(2)}(t_{k}-\lambda )}\lambda
^{s}d\lambda ,$ which can be found by a simple recurrence algorithm: $%
I_{l}=-l\left[ A_{k}^{(2)}\right] ^{-1}I_{l-1}+\left[ A_{k}^{(2)}\right]
^{-1}\left( t_{k}^{l}I-t_{k-1}^{l}e^{-A_{k}^{(2)}\tau _{k}}\right)
,l=1,2,...,s,$ $I_{0}=\left[ A_{k}^{(2)}\right] ^{-1}\left(
I-e^{-A_{k}^{(2)}\tau _{k}}\right) ,$ where the operator exponentials can be
computed by the exponentially convergent algorithm from \cite{gm5}, \cite{vas2}.\bigskip
\end{remark}

\bigskip After separating of $x_{0}^{(l)}$ and $y_{0}^{(l)}$ in (\ref{fs6}) (that we
assume are known from the previous step) we have

\begin{equation}\label{fs6-s}
\begin{split}
& x_{k}^{(l)}=\mathrm{e}^{-A^{(2)}_{l,k}\frac{\tau }{2}\theta_k}x_{k-1}^{(l)}+
\alpha _{k0}x_0^{(l)} +\beta_{k0} y_{0}^{(l)} + \sum_{j=1}^{N}\alpha _{kj}x_{j}^{(l)} +\sum_{j=1}^{N}\beta_{kj}y_{j}^{(l)}+\phi_{k}^{(l)}, \\
& y_{k}^{(l)}=\partial _{0,l}(s_{k})\left[ \mathrm{e}^{-A^{(2)}_{l,k}\frac{\tau }{2}\theta_k}x_{k-1}^{(l)}+
\alpha _{k0}x_0^{(l)} +\beta_{k0} y_{0}^{(l)} + \sum_{j=1}^{N}\alpha _{kj}x_{j}^{(l)} +\sum_{j=1}^{N}\beta_{kj}y_{j}^{(l)}+\phi_{k}^{(l)}\right] , \\
& k=1,...,N;\;x_{0}^{(l)}=x_{N}^{(l-1)},~y_{0}^{(l)}=y_{N}^{(l-1)},
\end{split}
\end{equation}

For errors $z_{x}^{(l)}=(z_{x,1}^{(l)},...,z_{x,N}^{(l)}),$ $z_{y}^{(l)}=(z_{y,1}^{(l)},...,z_{y,N}^{(l)})$ with $z_{x,k}^{(l)}=v_{l}(s_{k})-x_{k}$ and $z_{y,k}^{(l)}=\partial_{0,l}(s_{k})v_{l}(s_{k})-y_{k}$ we have relations

\begin{equation}
\begin{split}
z_{x,k}^{(l)}& = \alpha _{k0}z_{x,N}^{(l-1)}+ \beta_{k0}z_{y,N}^{(l-1)} +\mathrm{e}^{-A^{(2)}_{l,k} \frac{\tau }{2} \theta_{k}} z_{x,k-1}^{(l)}+ \sum_{j=1}^{N}\alpha _{kj}z_{x,j}^{(l)}  +\sum_{j=1}^{N}\beta_{kj}z_{y,j}^{(l)} +\psi _{k}^{(l)}, \\
z_{y,k}^{(l)}& =\partial _{0,l}(s_{k})\left[ \alpha _{k0}z_{x,N}^{(l-1)}+ \beta_{k0}z_{y,N}^{(l-1)} +\mathrm{e}^{-A^{(2)}_{l,k} \frac{\tau }{2} \theta_{k}} z_{x,k-1}^{(l)}+ \sum_{j=1}^{N}\alpha _{kj}z_{x,j}^{(l)}  +\sum_{j=1}^{N}\beta_{kj}z_{y,j}^{(l)} +\psi _{k}^{(l)} \right] \\
k& =1,...,N;
\end{split}
\label{poh}
\end{equation}
where 
\begin{equation}\label{fs9}
\begin{split}
\psi _{k}^{(l)}= \frac{\tau}{2} \int_{s_{k-1}}^{s_k}
e^{-A^{(2)}_{l,k}\frac{\tau}{2}(s_k - \eta)} \left\{A_{l,k}-A_l(\eta)\right\} \left\{ v_l(\eta) -P_{N}(\eta;v_{l}) \right\}  d \eta  \\
 -\frac{\tau }{2}\int_{s_{k-1}}^{s_k}A^{(2)}_{l,k}\mathrm{e}^{-A^{(2)}_{l,k}\frac{\tau }{2}(s_{k} -\eta)}B_{l,k} [\partial _{0,l}(\eta )v_{l}(\eta ) -P_{N}(\eta;\partial _{0,l} v_{l})]d\eta . 
\end{split} 
\end{equation}

In order to represent algorithm (\ref{fs6-s}) in a block-matrix form
we introduce the following matrix and vectors:
\begin{equation*}
S^{(l)}=\{s_{i,k}\}_{i,k=1}^N=
  \begin{pmatrix}
    E_X & 0 & 0 & \cdot & \cdot & \cdot & 0 & 0 \\
    -e^{-A_{l,2}^{(2)} \frac{\tau}{2}\theta_2} & E_X & 0 & \cdot & \cdot & \cdot & 0 & 0 \\
    0 & -e^{-A_{l,3}^{(2)} \frac{\tau}{2}\theta_3} & E_X & \cdot & \cdot & \cdot & 0 & 0 \\
    \cdot & \cdot & \cdot & \cdot & \cdot & \cdot & \cdot & \cdot \\
    0 & 0 & 0 & \cdot & \cdot & \cdot & -e^{-A_{l,N}^{(2)} \frac{\tau}{2}\theta_{N}} & E_X \
  \end{pmatrix},
\end{equation*}
\begin{equation*}
F_x^{(l)}=
  \begin{pmatrix}
    [A_{l,1}^{(2)}]^{\gamma}e^{-A_{l,1}^{(2)} \frac{\tau}{2}\theta_1}+ [A_{l,1}^{(2)}]^{\gamma}\alpha_{1,0} \\
    [A_{l,2}^{(2)}]^{\gamma}\alpha_{2,0} \\
    \vdots \\
    [A_{l,N}^{(2)}]^{\gamma}\alpha_{N,0}  \
  \end{pmatrix},
\end{equation*}
\begin{equation*}
F_y^{(l)}=
  \begin{pmatrix}
    [A_{l,1}^{(2)}]^{\gamma}\beta_{1,0} \\
    [A_{l,2}^{(2)}]^{\gamma}\beta_{2,0} \\
    \vdots \\
    [A_{l,N}^{(2)}]^{\gamma}\beta_{N,0} \
  \end{pmatrix},
\end{equation*}
with $E_X$ being the identity operator in $X,$ the matrix
$C^{(l)}=\{\tilde{\alpha}_{k,j}\}_{k,j=1}^N$ with
$\tilde{\alpha}_{k,j}=[A_{l,k}^{(2)}]^\gamma
\alpha_{k,j}[A_{l,j}^{(2)}]^{-\gamma}$, the matrix
$D^{(l)}=\{\tilde{\beta}_{k,j}\}_{k,j=1}^N$ with
$\tilde{\beta}_{k,j}=[A_{l,k}^{(2)}]^\gamma \beta_{k,j}$ and the
vectors
\begin{equation}\label{fs11}
\begin{split}
 & \tilde{x}^{(l)}=\begin{pmatrix}
    [A_{l,1}^{(2)}]^\gamma x_1^{(l)} \\
    \cdot \\
    \cdot \\
    \cdot \\
    [A_{l,N}^{(2)}]^\gamma x_N^{(l)} \
  \end{pmatrix}, \quad
  f_x^{(l)}=\begin{pmatrix}
    [A_{l,1}^{(2)}]^\gamma \phi_1^{(l)} \\
    \cdot \\
    \cdot \\
   \cdot \\
    [A_{l,N}^{(2)}]^\gamma \phi_N^{(l)} \
  \end{pmatrix}, \\
 & \tilde{\psi}^{(l)}=\begin{pmatrix}
    [A_{l,1}^{(2)}]^\gamma \psi_{1}^{(l)} \\
    \cdot \\
    \cdot \\
    \cdot \\
    [A_{l,N}^{(2)}]^\gamma \psi_{N}^{(l)} \
  \end{pmatrix}.
  \end{split}
\end{equation}
Besides, we introduce the matrix
\begin{equation}\label{fs11-1}
\tilde{S}^{(l)}=\{s_{i,k}\}_{i,k=1}^N=
  \begin{pmatrix}
    E_X & 0 & 0 & \cdot & \cdot & \cdot & 0 & 0 \\
    - \tilde{s}_{1}& E_X & 0 & \cdot & \cdot & \cdot & 0 & 0 \\
    0 & -\tilde{s}_{2} & E_X & \cdot & \cdot & \cdot & 0 & 0 \\
    \cdot & \cdot & \cdot & \cdot & \cdot & \cdot & \cdot & \cdot \\
    0 & 0 & 0 & \cdot & \cdot & \cdot & -\tilde{s}_{N-1}& E_X \
  \end{pmatrix},
\end{equation}
with $\tilde{s}_{i-1}=e^{-A_{l,i}^{(2)}\frac{\tau}{2}\theta_i}[A_{l,i}^{(2)}]^{\gamma}[A_{l,i-1}^{(2)}]^{-\gamma},
i=2,...,N.$

It is easy to see that for the (left) inverse
\begin{equation}\label{fs13}
\begin{split}
&(\tilde{S}^{(l)})^{-1}=\{\tilde{s}_{i,k}^{(-1)}\}_{i,k=1}^N\\
&=\begin{pmatrix}
 E_X & 0&   \cdots & 0 & 0 \\
 \tilde{s}_{1} &E_X  &  \cdots & 0 & 0 \\
 \tilde{s}_{2}\tilde{s}_{1} &\tilde{s}_{2}  &  \cdots & 0 & 0 \\
  \cdot & \cdot &  \cdots & \cdot & \cdot \\
  \tilde{s}_{N-1} \cdots \tilde{s}_{1} & \tilde{s}_{N-1}\cdots \tilde{s}_{2}
   & \cdots & \tilde{s}_{N-1} & E_X
\end{pmatrix}
\end{split}
\end{equation}
it holds
\begin{equation}\label{fs12}
  (\tilde{S}^{(l)})^{-1}\tilde{S}^{(l)}=\begin{pmatrix}
    E_X & 0 & \cdots & 0 \\
    0 & E_X & \cdots & 0 \\
    \cdot &\cdot & \cdots & \cdot \\
    0& 0 & \cdots & E_X \
  \end{pmatrix}.
\end{equation}
\begin{remark}
Using results of \cite{ gm5, ghk,ghk3} one can get a parallel and
sparse approximations with an exponential convergence rate of the
operator exponentials contained in $(\tilde{S}^{(l)})^{-1}$ and as a
consequence a parallel and sparse approximation of
$\tilde{S}^{-1}.$
\end{remark}

We multiply the first equation in  (\ref{fs6-s}) and the first equation in (\ref{poh}) by $[A_{l,k} ^{(2)}]^\gamma$ to obtain a solution of \eqref{fs6-s} and estimating of error. Then, \eqref{fs6-s}, \eqref{poh} can be written in the matrix form as follows:

\begin{equation}
\begin{split}
\tilde{S}^{(l)}\tilde{x}^{(l)}& = C^{(l)} \tilde{x}^{(l)} + D^{(l)}y^{(l)} +F_x^{(l)}{x}^{(l)}_0 +F_y^{(l)}y^{(l)}_0 + f_x^{(l)}, \\
y^{(l)}& =\Lambda \left[ (I-\tilde{S}^{(l)})\tilde{x}^{(l)} +C^{(l)} \tilde{x}^{(l)} + D^{(l)}\tilde{y}^{(l)} +F_x^{(l)}{x}^{(l)}_0 +F_y^{(l)}y^{(l)}_0 + f_x^{(l)} \right] ,
\end{split}
\label{fs10}
\end{equation}

\begin{equation}
\begin{split}
\tilde{S}^{(l)}\tilde{z}_{x}^{(l)}& = C^{(l)} \tilde{z}_{x}^{(l)} + D^{(l)}z_{y}^{(l)} +F_x^{(l)}{z}_{x,N}^{(l-1)} +F_y^{(l)}z_{y,N}^{(l-1)} + \tilde{\psi}^{(l)}, \\
z_{y}^{(l)}& =\Lambda \left[ (I-\tilde{S}^{(l)})\tilde{z}_{x}^{(l)} +C^{(l)} \tilde{z}_{x}^{(l)} + D^{(l)}z_{y}^{(l)} +F_x^{(l)}{z}_{x,N}^{(l-1)} +F_y^{(l)}z_{y,N}^{(l-1)}+ \tilde{\psi}^{(l)} \right] ,
\end{split}
\label{pohm}
\end{equation}
where

\begin{equation*}
\Lambda =diag\left[ \partial _{0,l}(s_{1})[A_{l,1}^{(2)}]^{-\gamma},...,\partial
_{0,l}(s_{N})[A_{l,N}^{(2)}]^{-\gamma}\right] .
\end{equation*}

The systems \eqref{fs10} and \eqref{pohm} are equivalent to the following ones:

\begin{equation}
\begin{split}
\tilde{S}^{(l)}\tilde{x}^{(l)}& = C^{(l)} \tilde{x}^{(l)} + D^{(l)}y^{(l)} +F_x^{(l)}\tilde{x}^{(l-1)}_N +F_y^{(l)}y^{(l-1)}_N + f_x^{(l)}, \\
y^{(l)}& =\Lambda \left[ (I-\tilde{S}^{(l)})\tilde{x}^{(l)} +C^{(l)} \tilde{x}^{(l)} + D^{(l)}\tilde{y}^{(l)} +F_x^{(l)}\tilde{x}^{(l-1)}_N +F_y^{(l)}y^{(l-1)}_N + f_x^{(l)} \right] ,
\end{split}
\label{fs10-m}
\end{equation}

For a vector $v=(v_{1},v_{2},...,v_{N})^{T}$ and a block operator matrix $A=\{a_{ij}\}_{i,j=1}^{N}$ we introduce the vector norm 
\begin{equation}
|\Vert v\Vert |\equiv |\Vert v\Vert |_{\infty }=\max_{1\leq k\leq N}\Vert
v_{k}\Vert  \label{fs21}
\end{equation}
and the consistent matrix norm 
\begin{equation}
|\Vert A\Vert |\equiv |\Vert A\Vert |_{\infty }=\max_{1\leq i\leq
N}\sum_{j=1}^{N}\Vert a_{i,j}\Vert .  \label{fs22}
\end{equation}

For further analysis we need the following auxiliary result.

\begin{lemma}\label{l1}
Under assumptions {\bf B1}- {\bf B6}  the following estimates hold
true
\begin{equation}\label{l100}
\begin{split}
&|\|(\tilde{S}^{(l)})^{-1}\|| \le  c N,\\
&|\|C^{(l)}\||\le   c (\frac{\tau}{2})^{2-\gamma}  N^{\gamma-2} \ln{N} ,\\
&|\|D^{(l)}\||\le   c\frac{\tau}{2} N^{-1/q}\ln{N}, \; 1/p+1/q=1,\\
& |\| \Lambda \||\le c.
\end{split}
\end{equation}
with a positive constant $c$ independent of $N.$
\end{lemma}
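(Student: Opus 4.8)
The plan is to bound each of the four quantities separately in the row-sum norm $|\|\cdot\||_\infty$ of \eqref{fs22}, using the smoothing and Lipschitz hypotheses B1--B6 together with the classical estimate $\sum_{j=0}^N|L_{j,N}(s)|\le c\ln N$ for the Lebesgue function of Chebyshev--Gauss--Lobatto interpolation. The bound on $\Lambda$ is immediate: it is block-diagonal, so $|\|\Lambda\||=\max_k\|\partial_{0,l}(s_k)[A^{(2)}_{l,k}]^{-\gamma}\|$, which is uniformly bounded by B5 and by the uniform boundedness of the negative fractional powers $[A^{(2)}]^{-\gamma}$ granted by the strong positivity B1.

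For $(\tilde S^{(l)})^{-1}$ I would exploit the explicit form \eqref{fs13}, in which each nonzero subdiagonal block is an ordered product $\tilde s_{i-1}\tilde s_{i-2}\cdots\tilde s_k$ of the factors $\tilde s_m=e^{-A^{(2)}_{l,m+1}\frac\tau2\theta_{m+1}}[A^{(2)}_{l,m+1}]^{\gamma}[A^{(2)}_{l,m}]^{-\gamma}$. Each factor satisfies $\|\tilde s_m\|\le 1+c\theta_{m+1}$, because the exponential is a contraction by B2 and the fractional-power mismatch $\|[A^{(2)}_{l,m+1}]^{\gamma}[A^{(2)}_{l,m}]^{-\gamma}-I\|$ is $O(\theta_{m+1})$ by B4. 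Multiplying over a block and using $\sum_m\theta_{m+1}=s_i-s_k\le 2$ gives $\prod_m(1+c\theta_{m+1})\le e^{2c}$, so every block is bounded independently of $N$; since a row of $(\tilde S^{(l)})^{-1}$ has at most $N$ nonzero blocks, $|\|(\tilde S^{(l)})^{-1}\||\le cN$.

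For $C^{(l)}$ I would bound the integrand of $\tilde\alpha_{k,j}=[A^{(2)}_{l,k}]^\gamma\alpha_{k,j}[A^{(2)}_{l,j}]^{-\gamma}$ pointwise. Splitting $\{A_{l,k}-A_l(\eta)\}[A^{(2)}_{l,j}]^{-\gamma}=\{A_{l,k}-A_l(\eta)\}[A^{(2)}_{l,k}]^{-\gamma}\,[A^{(2)}_{l,k}]^{\gamma}[A^{(2)}_{l,j}]^{-\gamma}$, the first factor is $O(\frac\tau2|s_k-\eta|)$ by B3 (the change of variables $t=\psi_l(s)$ turns the time separation into $\frac\tau2|s_k-\eta|$) and the second is uniformly bounded by B4, while $\|[A^{(2)}_{l,k}]^\gamma e^{-A^{(2)}_{l,k}\frac\tau2(s_k-\eta)}\|\le c(\frac\tau2(s_k-\eta))^{-\gamma}$ by B1. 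The kernel then behaves like a constant times $(s_k-\eta)^{1-\gamma}$, which is integrable; bounding it by $\theta_k^{1-\gamma}$ on the cell, using $\sum_j|L_{j,N}|\le c\ln N$ and the cell length $\theta_k\le\pi/N$, and collecting the Duhamel prefactor $\frac\tau2$ together with the powers of $\frac\tau2$ above, produces the stated $c(\frac\tau2)^{2-\gamma}N^{\gamma-2}\ln N$.

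The main obstacle is $D^{(l)}$, because the kernel of $\tilde\beta_{k,j}=[A^{(2)}_{l,k}]^\gamma\beta_{k,j}$ equals $[A^{(2)}_{l,k}]^{1+\gamma}e^{-A^{(2)}_{l,k}\frac\tau2(s_k-\eta)}B_{l,k}$, and $B_{l,k}$ does \emph{not} map into $D([A^{(2)}]^{1+\gamma})$; a pointwise B1-type bound would only yield the non-integrable singularity $(s_k-\eta)^{-(1+\gamma)}$. This is exactly what hypothesis B6 is designed to bypass, as it postulates merely the $L^p$-integrability of this kernel in the exponential variable. I would therefore sum over $j$ inside the integral, replace $\sum_j|L_{j,N}|$ by $c\ln N$, and split the $\eta$-integral by Hölder's inequality with the conjugate exponents $p,q$: the $L^p$ factor of the operator kernel is bounded uniformly in $k$ by B6 (after the change of variables in the exponential), while the $L^q$ norm of the Lebesgue bound over a cell of length $\theta_k\le\pi/N$ contributes $N^{-1/q}\ln N$; together with the prefactor $\frac\tau2$ this gives $c\frac\tau2 N^{-1/q}\ln N$. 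The one genuinely delicate point throughout is keeping track of the factors $\frac\tau2$ produced by the scaling $t=\psi_l(s)$ and by the scaled operator exponentials.
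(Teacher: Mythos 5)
Your proof is correct and follows essentially the same route as the paper's: the same factorization of $\tilde\alpha_{kj}$ via B1, B3, B4, the same pointwise bounding of the subdiagonal products of $(\tilde S^{(l)})^{-1}$ using B2 and B4, and the same H\"older/B6 argument for $D^{(l)}$ (which the paper leaves implicit but your write-up makes explicit, correctly identifying why a pointwise bound would fail there). The only cosmetic difference is that you bound each block product by $\prod_m(1+c\theta_{m+1})\le e^{2c}$ using $\sum_m\theta_{m+1}\le 2$, whereas the paper majorizes the row sum by a geometric series with ratio $e^{-\omega\tau/(2N)}(1+c\tau/(2N))$; both extract the factor $N$ from the number of blocks per row.
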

\begin{proof}
{Due to {\bf
(B4)} we have
$$|\|[A_{l,k}^{(2)}]^{\gamma}[A_{l,k-1}^{(2)}]^{-\gamma}\| |= |
\|[A_{l,k}^{(2)}]^{\gamma}[A_{l,k-1}^{(2)}]^{-\gamma}-E_X+E_X|\|\le
1+c \frac{\tau}{2}\theta_k.$$ Using this estimate, $\max_k \theta_k\le\frac{\pi}{N}$ and {\bf (B2)} } we get further if $\frac{\tau}{2}\le 1$
\begin{equation}\label{fs23}
\begin{split}
&|\|(\tilde{S}^{(l)})^{-1}\|| \le 1+e^{-\omega \frac{\tau}{2}\frac{1}{N}}(1+c\frac{\tau}{2}\frac{1}{N}) +\cdots+[e^{-\omega \frac{\tau}{2}\frac{1}{N}}(1+c \frac{\tau}{2}\frac{1}{N})]^{N-1}\\
&\le 1+(1+c \frac{1}{N})+\cdots+(1+c \frac{1}{N})^{N-1} \le \frac{e^{2 c}}{c \frac{1}{N}} \le c N.
\end{split}
\end{equation}

Using {\bf B2}, {\bf B3} for $C^{(l)}$ we have
\[
|\|C^{(l)}\||\le \max_{1 \le k \le N}\sum_{j=1}^N
\|\tilde{\alpha}_{kj}\|
\]
\[
=\max_{1 \le k \le N}\sum_{j=1}^N \frac{\tau}{2}
\|\int_{s_{k-1}}^{s_k}[A_{l,k}^{(2)}]^{\gamma}e^{-A_{l,k}^{(2)}\frac{\tau}{2}(s_k-\eta)} [A_{l,k}^{(2)} - A_l(\eta)] L_{j,N}(\eta) [A_{l,j}^{(2)}]^{-\gamma}  d \eta\|
\]
\[
\le \frac{\tau}{2} \max_{1 \le k \le N}\sum_{j=1}^N 
\int_{s_{k-1}}^{s_k}\|[A_{l,k}^{(2)}]^{\gamma}e^{-A_{l,k}^{(2)}\frac{\tau}{2}(s_k-\eta)} \| \, \| [A_{l,k}^{(2)} - A_l(\eta)] [A_{l,k}^{(2)}]^{-\gamma}\| \, \|[A_{l,k}^{(2)}]^{\gamma}  [A_{l,j}^{(2)}]^{-\gamma} \| \, | L_{j,N}(\eta)| d \eta
\]
\[
\le \frac{\tau}{2} \,  \max_{1 \le k \le N} 
\int_{s_{k-1}}^{s_k} (\frac{\tau}{2}(s_k-\eta))^{-\gamma} \frac{\tau}{2}(s_k-\eta) c \sum_{j=1}^N| L_{j,N}(\eta)| d \eta
\]
\[
\le c \Lambda_N (\frac{\tau}{2})^{2-\gamma}  \max_{1 \le k \le N} 
\int_{s_{k-1}}^{s_k} (s_k-\eta)^{1-\gamma} d \eta
\]
\[
\le c (\frac{\tau}{2})^{2-\gamma}  N^{\gamma-2} \ln{N},
\]
where \[\Lambda_n=\max_{-1 \le \tau \le
1}\sum_{j=1}^n|L_{j,N}(\tau)|\] 
is the Lebesgue constant related
to the Chebyshev-Gauss-Lobatto interpolation nodes.
  For the matrix $D^{(l)}$ we have from
(\ref{fs1+2})
\[
|\|D^{(l)}\||\le \max_{1 \le k \le N}\sum_{j=1}^N
\|\tilde{\beta}_{kj}\| 
\]
\[=\max_{1 \le k \le N}\sum_{j=1}^N \frac{\tau}{2}
\|\int_{s_{k-1}}^{s_k}[A_{l,k}^{(2)}]^{1+\gamma}e^{-A_{l,k}^{(2)}\frac{\tau}{2}(s_k-\eta)} B_{l,k}^{(2)}
L_{j,N}(\eta) d \eta\| 
\]
\[
\le \frac{\tau}{2} \, \max_{1 \le k \le N}\int_{s_{k-1}}^{s_k}\|[A_{l,k}^{(2)}]^{1+\gamma}e^{-A_{l,k}^{(2)}\frac{\tau}{2}(s_k-\eta)} B_{l,k}^{(2)} \| \sum_{j=1}^N |L_{j,N}(\eta) |d \eta 
\]
\[
\le c \frac{\tau}{2} \Lambda_N
 \int_{s_{k-1}}^{s_k} \|[A_{l,k}^{(2)}]^{1+\gamma} e^{-A_{l,k}^{(2)}\frac{\tau}{2}(s_k-\eta)} B_{l,k}^{(2)} \|
d \eta \le c\frac{\tau}{2} N^{-1/q}\ln{N}.
\]

The last estimate is a simple consequence of assumptions {\bf
(B1)} and {\bf (B5)}. The lemma is proved.
\end{proof}

From the second equation in (\ref{fs10-m}) one can write down

\begin{equation*}
[ I-\Lambda D^{(l)} ] y^{(l)}= \Lambda \left[ I-\tilde{S}^{(l)} +C^{(l)} \right] \tilde{x}^{(l)} +\Lambda \Phi^{(l)},
\end{equation*}
where 
\[
\Phi^{(l)}= F_x^{(l)}{x}^{(l-1)}_N +F_y^{(l)}y^{(l-1)}_N + f_x^{(l)}.
\]
If exists $[I-\Lambda D^{(l)}]^{-1}$ we have 
\begin{equation*}
 y^{(l)}= [ I-\Lambda D^{(l)} ]^{-1} \Lambda \left[ I-\tilde{S}^{(l)} +C^{(l)} \right] \tilde{x}^{(l)} + [ I-\Lambda D^{(l)} ]^{-1} \Lambda \Phi^{(l)}.
\end{equation*}
Otherwise one can choose appropriate $\tau $ so that $|\| \Lambda D^{(l)} \| |<1$ and in this case it means that there exists operator-matrix $[I-\Lambda D^{(l)}]^{-1}.$ Substituting this expression into the first equation in (\ref{fs10-m}) we have 
\[
G^{(l)} \tilde{x}^{(l)} = Q^{(l)} \Phi^{(l)} , 
\]
where
\[\begin{split}
&G^{(l)}=\tilde{S}^{(l)}- C^{(l)} - D^{(l)} [ I-\Lambda D^{(l)} ]^{-1} \Lambda [ I-\tilde{S}^{(l)} +C^{(l)} ], \\
&Q^{(l)}= D^{(l)} [ I-\Lambda D^{(l)} ]^{-1} \Lambda + I_X .
\end{split}
\]

Similarly one can obtain from \eqref{pohm} 

\begin{equation*}
\begin{split}
 &z_y^{(l)}= [ I-\Lambda D^{(l)} ]^{-1} \Lambda \left[ I-\tilde{S}^{(l)} +C^{(l)} \right] \tilde{z}_x^{(l)} + [ I-\Lambda D^{(l)} ]^{-1} \Lambda \tilde{\Psi}^{(l)},\\
 &G^{(l)} \tilde{z}_x^{(l)} = Q^{(l)} \tilde{\Psi}^{(l)},
\end{split}
\end{equation*}
where
\[
\tilde{\Psi}^{(l)}= F_x^{(l)}z_{x,N}^{(l-1)} +F_y^{(l)}z_{y,N}^{(l-1)} + \tilde{\psi}^{(l)}
\]

\begin{lemma}\label{l2}
Under assumptions of Lemma \ref{l1} there exists $(G^{(l)})^{-1}$ and it
holds
\begin{equation} \label{fs29}
\begin{split}
&|\|(G^{(l)})^{-1}\|| \le c N , \\
&|\|Q^{(l)} \|| \le c
\end{split}
\end{equation}
with some constant independent on $N$.
\end{lemma}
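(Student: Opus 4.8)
The plan is to treat $G^{(l)}$ as a perturbation of the nearly triangular matrix $\tilde S^{(l)}$, whose inverse is already controlled by Lemma \ref{l1}, and to invert it by a Neumann series. Writing $M^{(l)}=[I-\Lambda D^{(l)}]^{-1}$ and $R^{(l)}=C^{(l)}+D^{(l)}M^{(l)}\Lambda\,[I-\tilde S^{(l)}+C^{(l)}]$, we have $G^{(l)}=\tilde S^{(l)}-R^{(l)}=\tilde S^{(l)}[I-(\tilde S^{(l)})^{-1}R^{(l)}]$. Provided $|\|(\tilde S^{(l)})^{-1}R^{(l)}\||<1$, the factor $[I-(\tilde S^{(l)})^{-1}R^{(l)}]$ is invertible through $\sum_{m\ge0}[(\tilde S^{(l)})^{-1}R^{(l)}]^m$, and then $(G^{(l)})^{-1}=[I-(\tilde S^{(l)})^{-1}R^{(l)}]^{-1}(\tilde S^{(l)})^{-1}$ exists with $|\|(G^{(l)})^{-1}\||\le |\|(\tilde S^{(l)})^{-1}\||/(1-|\|(\tilde S^{(l)})^{-1}R^{(l)}\||)\le cN$, which is the first assertion of \eqref{fs29}.

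First I would dispose of the auxiliary factor $M^{(l)}$ and of $Q^{(l)}=I+D^{(l)}M^{(l)}\Lambda$. By Lemma \ref{l1} the consistent norm gives $|\|\Lambda D^{(l)}\||\le |\|\Lambda\||\,|\|D^{(l)}\||\le c\frac{\tau}{2}N^{-1/q}\ln N$, which is $<1$ for the admissible $\tau$ (cf. the discussion preceding the statement); hence $M^{(l)}=\sum_{m\ge0}(\Lambda D^{(l)})^m$ exists with $|\|M^{(l)}\||\le (1-|\|\Lambda D^{(l)}\||)^{-1}\le c$. Consequently $|\|D^{(l)}M^{(l)}\Lambda\||\le c\,|\|D^{(l)}\||\to 0$, so $Q^{(l)}$ differs from the identity by a term of norm $o(1)$; this at once yields $|\|Q^{(l)}\||\le c$, the second assertion, and also shows $Q^{(l)}$ to be boundedly invertible. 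One may in fact prefer the cleaner identity $G^{(l)}=Q^{(l)}[\tilde S^{(l)}-C^{(l)}-D^{(l)}\Lambda]$, which follows from $M^{(l)}\Lambda=\Lambda+M^{(l)}\Lambda D^{(l)}\Lambda$ and reduces the whole problem to inverting $\tilde S^{(l)}-C^{(l)}-D^{(l)}\Lambda$.

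The heart of the matter is therefore the estimate $|\|(\tilde S^{(l)})^{-1}R^{(l)}\||<1$. Using $|\|(\tilde S^{(l)})^{-1}\||\le cN$ and submultiplicativity I would split $R^{(l)}$ into its $C^{(l)}$-part and its $D^{(l)}$-part. For the first, Lemma \ref{l1} gives $|\|(\tilde S^{(l)})^{-1}C^{(l)}\||\le cN\cdot(\frac{\tau}{2})^{2-\gamma}N^{\gamma-2}\ln N=c(\frac{\tau}{2})^{2-\gamma}N^{\gamma-1}\ln N$, which tends to $0$ as $N\to\infty$ for $\gamma<1$ and is harmless. The delicate term is the second: since $|\|I-\tilde S^{(l)}+C^{(l)}\||\le c$ and $|\|M^{(l)}\Lambda\||\le c$, the factor-by-factor estimate only gives $|\|(\tilde S^{(l)})^{-1}D^{(l)}M^{(l)}\Lambda\,[I-\tilde S^{(l)}+C^{(l)}]\||\le cN\cdot\frac{\tau}{2}N^{-1/q}\ln N=c\frac{\tau}{2}N^{1-1/q}\ln N$, and because $1-1/q=1/p>0$ this bound actually grows with $N$, so the prefactor $\frac{\tau}{2}$ must be exploited to absorb it.

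I expect this $D^{(l)}$-term to be the main obstacle: the triangular-inverse bound $cN$ and the quadrature bound $N^{-1/q}$ do not close the argument by themselves, and one must either use the freedom in the time step (choosing the mesh so that $\frac{\tau}{2}N^{1/p}\ln N$ stays below a fixed fraction of $1$, consistent with the phrase ``one can choose appropriate $\tau$'' preceding the statement) or sharpen the interaction between the lower-triangular structure of $(\tilde S^{(l)})^{-1}$ and the smallness of the entries of $D^{(l)}\Lambda$, instead of bounding their product one factor at a time. Once $|\|(\tilde S^{(l)})^{-1}R^{(l)}\||<1$ is secured, both inequalities in \eqref{fs29} follow from the Neumann estimates collected above.
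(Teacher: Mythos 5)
Your proposal is correct and follows essentially the same route as the paper: the factorization $G^{(l)}=\tilde S^{(l)}[I-G_1^{(l)}]$ with $G_1^{(l)}=(\tilde S^{(l)})^{-1}C^{(l)}+(\tilde S^{(l)})^{-1}D^{(l)}[I-\Lambda D^{(l)}]^{-1}\Lambda(I-\tilde S^{(l)}+C^{(l)})$, factor-by-factor bounds from Lemma \ref{l1}, and a Neumann series once $|\|G_1^{(l)}\||<1$. The ``delicate'' $D^{(l)}$-term you flag, of size $c\frac{\tau}{2}N^{1-1/q}\ln N$, appears verbatim in the paper's estimate \eqref{fs30}, and the paper resolves it exactly as you suggest, by choosing $\tau$ (depending on $N$) small enough that $|\|G_1^{(l)}\||<1$.
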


\begin{proof}
We represent $G^{(l)}=\tilde{S}^{(l)}[I_X-G_1^{(l)}]$ and estimate $|\|G_1^{(l)}|\|$ with 
\[
G_1^{(l)}=(\tilde{S}^{(l)})^{-1}C^{(l)}+(\tilde{S}^{(l)})^{-1}D^{(l)}[I_Y-\Lambda
D^{(l)}]^{-1} \Lambda (I_X - \tilde{S}^{(l)} + C^{(l)}).
\]

  We have in the case when exists $[I-\Lambda D^{(l)}]^{-1}$ (this can be always achieved, see comments above)
\[
|\|G_1^{(l)}\|| \le |\|(\tilde{S}^{(l)})^{-1}\||\cdot |\|C^{(l)}\||+|\|(\tilde{S}^{(l)})^{-1}\|| \cdot
 |\|D^{(l)}\|| c \||\Lambda\||
\cdot (\||I_X-\tilde{S}^{(l)}\||+\||C^{(l)}\||)
\]
and now Lemma \ref{l1} implies
\begin{equation} \label{fs30}
|\|G_1^{(l)}\|| \le c \ln{N} \left(
\frac{1}{N^{1-\gamma}} \left(\frac{\tau}{2}\right)^{2-\gamma} +\frac{1}{N^{1/q-1}}\frac{\tau}{2} \right).
\end{equation}
This estimate guarantees the existence of the bounded inverse
operator $(I_X -G_1)^{-1}$ by the appropriate choose of $\tau$ (to provide $|\|G_1^{(l)}\|| <1$) which together with the estimate
$|\|(\tilde{S}^{(l)})^{-1}\|| \le c N$ proves the first assertion of the
lemma. The second assertion is evident. The proof is complete.
\end{proof}

This lemma and representations of $\tilde{x}^{(l)},$ $y^{(l)},$ $\tilde{z}^{(l)}_x$ and $z_y^{(l)}$ imply the following stability estimates:
\begin{equation} \label{fs31}
\begin{split}
&\|| \tilde{x}^{(l)}\||\le c N \||\Phi^{(l)}\||,\\
& \||\tilde{z_x}^{(l)}\||\le c N\||\tilde{\psi}^{(l)}\||.
\end{split}
\end{equation}
\begin{equation} \label{fs32}
\begin{split}
&\||y^{(l)}\|| \le c N \||\Phi^{(l)}\||,\\
&\||z_y^{(l)}\|| \le c N\||\tilde{\psi}^{(l)}\||.
\end{split}
\end{equation}

Let $\Pi_{N}$ be the set of all polynomials in $t$ with vector
coefficients of degree less or equal then $N .$ In complete
analogy with \cite{babenko, szegoe, szegoe1} the following
Lebesgue inequality for vector-valued functions can be proved
\begin{equation}\label{fs32n}
\| u(\eta)-P_{N}(\eta; u)\|_{C[-1,1]}\equiv \max_{\eta \in
[-1,1]}\| u(\eta)-P_{N}(\eta; u)\| \le (1+\Lambda_N)E_N(u)
\end{equation}
with the error of the best approximation of $u$ by polynomials of
degree not greater then $N$
\begin{equation}\label{fs33}
E_N( u)=\underset{p \in \Pi_{N}}{\text{inf}}\max_{\eta \in
[-1,1]}\| u(\eta)-p(\eta)\|.
\end{equation}

Now, we can estimate the error of our algorithm for $l'$s stage.

\begin{theorem}\label{main0-0}
Let the assumptions of Lemma \ref{l1} with $\gamma<1$ hold, then
there exists a positive constant $c$ such that
\begin{enumerate}
\item For $N,$ $K$ large enough it holds
\begin{equation}\label{fs27n}
\begin{split}
&|\|\tilde{z}_x^{(l)}\||\le c \left\{\left(\frac{\tau}{2}\right)^{2-\gamma}N^{1-\gamma}\ln{N} E_N([A_{l,0}]^{\gamma}\tilde{v}_l) + \frac{\tau }{2}N^{1-1/q} \ln{N} E_N(\partial \tilde{v}_l)\right\},\\
&|\|z_y^{(l)}\||\le c \left\{\left(\frac{\tau}{2}\right)^{2-\gamma}N^{1-\gamma}\ln{N} E_N([A_{l,0}]^{\gamma} \tilde{v}_l) + \frac{\tau }{2}N^{1-1/q} \ln{N} E_N(\partial \tilde{v}_l)\right\}
\end{split}
\end{equation}
where $\tilde{v}_l$ is the solution of \eqref{fs2-1} with the initial condition $\tilde{v}_{l-1}(1)$;
  \item The equation for $\tilde{x}^{(l)}$  can be written in the form
  \begin{equation} \label{fs36}
  \tilde{x}^{(l)}=G_1^{(l)}\tilde{x}^{(l)} +[\tilde{S}^{(l)}]^{-1} Q^{(l)} \Phi^{(l)}
\end{equation}
and can be solved by the fixed point iteration
\begin{equation} \label{fs37}
\tilde{x}^{(l)}_{(k+1)}=G_1^{(l)}\tilde{x}^{(l)}_{(k)} +[\tilde{S}^{(l)}]^{-1} Q^{(l)} \Phi^{(l)}, \quad
k=0,1,...; \tilde{x}^{(l)}_{(0)}- \text{arbitrary}
\end{equation}
with the convergence rate of an geometrical progression with the
denominator $$q \le c \ln{N} \left(
\frac{1}{N^{1-\gamma}} \left(\frac{\tau}{2}\right)^{2-\gamma} +\frac{1}{N^{1/q-1}}\frac{\tau}{2} \right)<1$$ for $N,$ $K$ large enough.
\end{enumerate}
\end{theorem}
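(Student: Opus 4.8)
The plan is to derive both assertions from the stability bounds \eqref{fs31}, \eqref{fs32} and the contraction estimate \eqref{fs30}, so that the only genuinely new work is estimating the defect vector $\tilde{\psi}^{(l)}$ in terms of the best-approximation errors. For the first assertion I would start from $|\|\tilde{z}_x^{(l)}\||\le cN|\|\tilde{\psi}^{(l)}\||$ and $|\|z_y^{(l)}\||\le cN|\|\tilde{\psi}^{(l)}\||$, so that everything reduces to bounding $|\|\tilde{\psi}^{(l)}\||=\max_{1\le k\le N}\|[A_{l,k}^{(2)}]^{\gamma}\psi_k^{(l)}\|$ with $\psi_k^{(l)}$ given by \eqref{fs9}. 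I would then treat the two integrals in \eqref{fs9} separately, mirroring the estimates of $C^{(l)}$ and $D^{(l)}$ in Lemma \ref{l1}, but with the interpolation errors $v_l-P_N v_l$ and $\partial_{0,l}v_l-P_N(\partial_{0,l}v_l)$ playing the role that the Lagrange polynomials $L_{j,N}$ played there.

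For the first (the $A$-) integral I would insert $[A_{l,k}^{(2)}]^{-\gamma}[A_{l,k}^{(2)}]^{\gamma}$ between $A_{l,k}-A_l(\eta)$ and $v_l-P_N v_l$ and use three factor bounds: $\|[A_{l,k}^{(2)}]^{\gamma}e^{-A_{l,k}^{(2)}\frac{\tau}{2}(s_k-\eta)}\|\le c(\frac{\tau}{2}(s_k-\eta))^{-\gamma}$ from \eqref{i400}; $\|[A_{l,k}^{(2)}-A_l(\eta)][A_{l,k}^{(2)}]^{-\gamma}\|\le c\frac{\tau}{2}(s_k-\eta)$ from \eqref{i420}; and a bound on $\|[A_{l,k}^{(2)}]^{\gamma}(v_l(\eta)-P_N(\eta;v_l))\|$. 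For the last factor the key point is that, by \eqref{i430}, the operator $[A_{l,k}^{(2)}]^{\gamma}[A_{l,0}]^{-\gamma}$ is bounded uniformly in $k$, while the scalar interpolation $P_N$ commutes with the constant operator $[A_{l,0}]^{\gamma}$; hence $[A_{l,0}]^{\gamma}(v_l-P_Nv_l)=[A_{l,0}]^{\gamma}v_l-P_N([A_{l,0}]^{\gamma}v_l)$ and the Lebesgue inequality \eqref{fs32n} bounds it by $(1+\Lambda_N)E_N([A_{l,0}]^{\gamma}\tilde v_l)$. Integrating $(s_k-\eta)^{1-\gamma}$ over $(s_{k-1},s_k]$ and using $\Lambda_N\le c\ln N$ and $\max_k\theta_k\le\pi/N$ then produces the factor $(\frac{\tau}{2})^{2-\gamma}\ln N$ times the relevant power of $N$ and $E_N([A_{l,0}]^{\gamma}\tilde v_l)$, which after multiplication by the stability factor $cN$ yields the first term in \eqref{fs27n}. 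For the second (the $B$-) integral I would pull $[A_{l,k}^{(2)}]^{\gamma}$ inside to form $[A_{l,k}^{(2)}]^{1+\gamma}e^{-A_{l,k}^{(2)}\frac{\tau}{2}(s_k-\eta)}B_{l,k}$, bound the remaining factor $\partial_{0,l}v_l-P_N(\partial_{0,l}v_l)$ by $(1+\Lambda_N)E_N(\partial\tilde v_l)$ via \eqref{fs32n}, and estimate $\int_{s_{k-1}}^{s_k}\|[A_{l,k}^{(2)}]^{1+\gamma}e^{-A_{l,k}^{(2)}\frac{\tau}{2}(s_k-\eta)}B_{l,k}\|\,d\eta$ by H\"older's inequality together with \eqref{fs1+2}, producing a factor $\theta_k^{1/q}\le cN^{-1/q}$; multiplying by $cN$ gives the second term in \eqref{fs27n}. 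The same bounds, applied through \eqref{fs32}, give the stated estimate for $z_y^{(l)}$.

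The second assertion is essentially a reformulation of Lemma \ref{l2}. Since $G^{(l)}=\tilde S^{(l)}[I_X-G_1^{(l)}]$, the equation $G^{(l)}\tilde x^{(l)}=Q^{(l)}\Phi^{(l)}$ is equivalent to $(I_X-G_1^{(l)})\tilde x^{(l)}=[\tilde S^{(l)}]^{-1}Q^{(l)}\Phi^{(l)}$, which is precisely the fixed-point form \eqref{fs36}; the scheme \eqref{fs37} is the corresponding Picard iteration. Since \eqref{fs30} shows $|\|G_1^{(l)}\||\le q<1$ once $N,K$ are large enough (equivalently $\tau$ small enough), the affine map $x\mapsto G_1^{(l)}x+[\tilde S^{(l)}]^{-1}Q^{(l)}\Phi^{(l)}$ is a contraction in the norm $|\|\cdot\||_{\infty}$, and the contraction mapping principle gives convergence as a geometric progression whose denominator equals the bound in \eqref{fs30}.

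I expect the main obstacle to be the handling of the fractional power $[A_{l,k}^{(2)}]^{\gamma}$ acting on the interpolation error: one must commute it past the scalar interpolation $P_N$, which is legitimate only for a fixed base point and so forces replacing $A_{l,k}^{(2)}$ by the constant $A_{l,0}$ via \eqref{i430}, while keeping the resulting commutator bounded uniformly in $k$. This is exactly what makes the Lebesgue inequality \eqref{fs32n} applicable and what pins the $E_N$-factors in \eqref{fs27n} to $E_N([A_{l,0}]^{\gamma}\tilde v_l)$ and $E_N(\partial\tilde v_l)$. Everything else is the bookkeeping of powers of $\tau$, $N$ and $\ln N$ already carried out in the proof of Lemma \ref{l1}.
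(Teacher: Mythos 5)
Your proposal follows essentially the same route as the paper's proof: reduce both error bounds to estimating $|\|\tilde{\psi}^{(l)}\||$ via the stability inequalities \eqref{fs31}--\eqref{fs32}, split \eqref{fs9} into the $A$- and $B$-integrals, insert $[A_{l,k}]^{-\gamma}[A_{l,k}]^{\gamma}[A_{l,0}]^{-\gamma}$ so that the Lebesgue inequality \eqref{fs32n} applies to $[A_{l,0}]^{\gamma}\tilde v_l$ and $\partial\tilde v_l$, and obtain the second assertion from the factorization $G^{(l)}=\tilde S^{(l)}[I_X-G_1^{(l)}]$ together with \eqref{fs30}. The key subtlety you flag (commuting the fractional power past the scalar interpolation only after freezing the base operator at $A_{l,0}$) is exactly the device used in the paper, so the proposal is correct and not materially different.
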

\begin{proof}
For $\tilde{z}_x^{(l)}$ we have to estimate $\tilde{\psi}_x^{(l)}$ in (\ref{fs31}).

\begin{equation*}
\begin{split}
|\|\tilde{\psi}_{x}^{(l)} \||=&\max_{1 \le k \le N} \left\| \frac{\tau}{2} \int_{s_{k-1}}^{s_k} [A^{(2)}_{l,k}]^{\gamma}e^{-A^{(2)}_{l,k}\frac{\tau}{2}(s_k - \eta)} \left\{A_{l,k}-A_l(\eta)\right\} \left\{ \tilde{v}_l(\eta) -P_{N}(\eta; \tilde{v}_{l}) \right\}  d \eta \right. \\
 & \left. -\frac{\tau }{2} \int_{s_{k-1}}^{s_k} [A^{(2)}_{l,k}]^{\gamma+1}\mathrm{e}^{-A^{(2)}_{l,k} \frac{\tau }{2}(s_{k} -\eta)}B_{l,k} [\partial _{0,l}(\eta )\tilde{v}_{l}(\eta ) -P_{N}(\eta;\partial _{0,l}\tilde{v}_{l})]d\eta \right\| 
\end{split}
\end{equation*}
\begin{equation*}
\begin{split}
\le &\max_{1 \le k \le N} \left\| \frac{\tau}{2} \int_{s_{k-1}}^{s_k} [A^{(2)}_{l,k}]^{\gamma} e^{-A^{(2)}_{l,k}\frac{\tau}{2}(s_k - \eta)} \left\{A_{l,k}-A_l(\eta)\right\}[A_{l,k}]^{-\gamma} [A_{l,k}]^{\gamma} [A_{l,0}]^{-\gamma}\right. \\ 
&\left. \qquad \qquad \times \left\{ [A_{l,0}]^{\gamma}\tilde{v}_l(\eta) -P_{N}(\eta;[A_{l,0}]^{\gamma} \tilde{v}_{l}) \right\}  d \eta \right\| \\
 &+\max_{1 \le k \le N}\left\| \frac{\tau }{2}\int_{s_{k-1}}^{s_k} [A^{(2)}_{l,k}]^{\gamma+1}\mathrm{e}^{-A^{(2)}_{l,k}\frac{\tau }{2}(s_{k} -\eta)}B_{l,k} [\partial _{0,l}(\eta )\tilde{v}_{l}(\eta ) -P_{N}(\eta;\partial _{0,l} \tilde{v}_{l})]d\eta \right\| 
\end{split}
\end{equation*}
\begin{equation*}
\begin{split}
\le &c(\frac{\tau}{2})^{2-\gamma} \max_{1 \le k \le N} 
 \int_{s_{k-1}}^{s_k} (s_k - \eta)^{1-\gamma }d \eta \, \left\| [A_{l,0}]^{\gamma}\tilde{v}_l(\cdot) -P_{N}(\cdot;[A_{l,0}]^{\gamma} \tilde{v}_{l}) \right\|_{C[-1,1]}  \\
 &+\max_{1 \le k \le N} \frac{\tau }{2}\theta_k^{1/q} \left\| \partial \tilde{v}_l(\cdot) -P_{N}(\cdot;\partial  \tilde{v}_{l}) \right\|_{C[-1,1]} ,  
\end{split}
\end{equation*}
where $\partial \tilde{v}_{l}(\eta)= \partial _{0,l}(\eta ) \tilde{v}_{l}(\eta ). $
Further, using \eqref{fs32n} we obtain
\begin{equation}\label{ocpsi}
|\|\tilde{\psi}_{x}^{(l)} \|| \le  c(\frac{\tau}{2})^{2-\gamma}N^{2-\gamma}\ln{N} E_N([A_{l,0}]^{\gamma} \tilde{v}_l) + c\frac{\tau }{2}N^{-1/q} \ln{N} E_N(\partial \tilde{v}_l). 
\end{equation}

The first assertion of the theorem follows from (\ref{fs31})
and the second one from (\ref{fs32}). \end{proof}

Let us estimate the full error of approximation in the collocating points.
\[
|\|z^{(l)}\| | = \max_{1\le k \le N} \|v_l(s_k) -x_k^{(l)}\| \le \max_{1\le k \le N} \|v_l(s_k) - \tilde{v}_l(s_k)\| + \max_{1\le k \le N} \|\tilde{v}_l(s_k) -x_k^{(l)}\|,
\]
where the first summand in the right-hand side of inequality is the error cosed by approximation of the initial data for the $l'$s step of our algorithm. Let $z_v^{(l)}=(v_l(s_1) - \tilde{v}_l(s_1)),\dots ,v_l(s_N) - \tilde{v}_l(s_N).$ Therefore
\[
\begin{split}
|\|z^{(l)}\| | &\le |\|z_v^{(l)}\| | + | \|z_x^{(l)}\|| \le {\rm e}^{c\tau}|\|z_v^{(l-1)}\| | + | \|z_x^{(l)}\|| ={\rm e}^{c\tau}|\|z^{(l-1)}\| | + | \|z_x^{(l)}\|| \\
&\le \cdots \le \sum_{j=1}^{l}{\rm e}^{(l-j)c\tau} | \|z_x^{(j)}\|| \le \sum_{j=1}^{K}{\rm e}^{(K-j)c\tau} | \|z_x^{(j)}\||
\end{split}
\]
The same is valid for the error $|\|z_{\partial}^{(l)}\| |= \max_{1\le k \le N} \|\partial_{0,l}((s_k)v_l(s_k) -y_k^{(l)}\|$
\[
|\|z_{\partial}^{(l)}\| | \le \sum_{j=1}^{K}{\rm e}^{(K-j)c\tau} | \|z_y^{(j)}\||.
\]
Let us introduce the following notation $z=(z^{(1)},\dots ,z^{(K)}),$ $z_\partial =(z_\partial^{(1)},\dots ,z_\partial^{(K)}).$ Now we can formulate the main result. 
\begin{theorem}\label{main0}
Let the assumptions of theorem \ref{main0-0} hold, then there exists a positive constant $c$ such that for $N,$ $K$ large enough it holds
\begin{equation}\label{fs27nnnn}
\begin{split}
&|\|\tilde{z}_x^{(l)}\||\le c \left\{\left(\frac{\tau}{2}\right)^{2-\gamma}N^{1-\gamma}\ln{N} E_N([A_{l,0}]^{\gamma}\tilde{v}_l) + \frac{\tau }{2}N^{1-1/q} \ln{N} E_N(\partial \tilde{v}_l)\right\},\\
&|\|z_y^{(l)}\||\le c \left\{\left(\frac{\tau}{2}\right)^{2-\gamma}N^{1-\gamma}\ln{N} E_N([A_{l,0}]^{\gamma} \tilde{v}_l) + \frac{\tau }{2}N^{1-1/q} \ln{N} E_N(\partial \tilde{v}_l)\right\}
\end{split}
\end{equation}
where $\tilde{v}_l$ is the solution of \eqref{fs2-1} with the initial condition $\tilde{v}_{l-1}(1).$
\end{theorem}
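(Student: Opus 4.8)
The two inequalities are \emph{per-layer} estimates, for a single stage $l$, on the weighted $X$-error $\tilde{z}_x^{(l)}$ at the collocation points and on the $Y$-error $z_y^{(l)}$; they coincide verbatim with the first assertion of Theorem~\ref{main0-0}, so the plan is to reproduce the same two-step mechanism: a stability estimate for the error system \eqref{pohm}, followed by a consistency estimate for the local truncation vector $\tilde{\psi}^{(l)}$ of \eqref{fs9}, \eqref{fs11}. Here the tilde-errors are measured against $\tilde{v}_l$, the exact solution of \eqref{fs2-1} launched from the \emph{computed} value $\tilde{v}_{l-1}(1)=x_0^{(l)}$. The crucial consequence is that the starting error of the $l$-th layer vanishes, $\tilde{z}_{x,0}^{(l)}=\tilde{v}_l(-1)-x_0^{(l)}=0$, so the carried-over contribution $F_x^{(l)}z_{x,N}^{(l-1)}+F_y^{(l)}z_{y,N}^{(l-1)}$ drops out of $\tilde{\Psi}^{(l)}$ and one is left with $\tilde{\Psi}^{(l)}=\tilde{\psi}^{(l)}$; thus no summation over layers enters this statement.

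First I would carry out the \textbf{stability step}. Eliminating $z_y^{(l)}$ from \eqref{pohm} exactly as in the reduction preceding Lemma~\ref{l2}, the error equations take the condensed form
\[
G^{(l)}\tilde{z}_x^{(l)}=Q^{(l)}\tilde{\psi}^{(l)},\qquad
z_y^{(l)}=[I-\Lambda D^{(l)}]^{-1}\Lambda\bigl(I-\tilde{S}^{(l)}+C^{(l)}\bigr)\tilde{z}_x^{(l)}+[I-\Lambda D^{(l)}]^{-1}\Lambda\,\tilde{\psi}^{(l)}.
\]
Lemma~\ref{l2} provides the bounded inverses with $|\|(G^{(l)})^{-1}\||\le cN$ and $|\|Q^{(l)}\||\le c$, while Lemma~\ref{l1} controls $|\|\Lambda\||$, $|\|C^{(l)}\||$, $|\|D^{(l)}\||$ and $|\|[I-\Lambda D^{(l)}]^{-1}\||$. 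These combine into the stability estimates \eqref{fs31}, \eqref{fs32}, namely $|\|\tilde{z}_x^{(l)}\||\le cN\,|\|\tilde{\psi}^{(l)}\||$ and $|\|z_y^{(l)}\||\le cN\,|\|\tilde{\psi}^{(l)}\||$, reducing both claimed bounds to a single consistency estimate for $|\|\tilde{\psi}^{(l)}\||$.

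The \textbf{consistency step} is the main obstacle, and I would devote the bulk of the work to it, the target being estimate \eqref{ocpsi}. Starting from \eqref{fs9}, I would split each entry $\tilde{\psi}_k^{(l)}=[A_{l,k}^{(2)}]^{\gamma}\psi_k^{(l)}$ into its two integral contributions. In the first integral I would insert $[A_{l,k}^{(2)}]^{-\gamma}[A_{l,k}^{(2)}]^{\gamma}$ and the reference weight $[A_{l,0}]^{-\gamma}[A_{l,0}]^{\gamma}$, so that the interpolation error of $v_l$ is converted into that of the weighted function $[A_{l,0}]^{\gamma}\tilde{v}_l$; the operator factors are then controlled by the smoothing estimate \eqref{i400} (supplying the singular weight $(s_k-\eta)^{-\gamma}$ together with the power $(\tau/2)^{-\gamma}$), by hypothesis \textbf{(B3)} (supplying the Lipschitz factor $\propto(\tau/2)(s_k-\eta)$), and by \textbf{(B4)}, which shows $[A_{l,k}^{(2)}]^{\gamma}[A_{l,0}]^{-\gamma}$ to be uniformly bounded. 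In the second integral I would use hypothesis \textbf{(B6)} on $\|[A^{(2)}]^{1+\gamma}e^{-A^{(2)}(\cdot)}B\|$ together with H\"older's inequality to extract the factor $\theta_k^{1/q}$. In both integrals I would factor out $\sum_j|L_{j,N}(\eta)|\le\Lambda_N$ and then pass from the pointwise interpolation error to the best-approximation error through the Lebesgue inequality \eqref{fs32n}, at the cost of a single $1+\Lambda_N\le c\ln N$; integrating the remaining $(s_k-\eta)$-powers over a subinterval of length $\theta_k\le\pi/N$ yields \eqref{ocpsi}.

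Finally I would combine \eqref{ocpsi} with the stability factor $cN$ from the second step; the powers of $N$ and of $\tau/2$ then assemble into precisely the two displayed right-hand sides for $N,K$ large enough. The single genuinely delicate point is the unbounded-operator bookkeeping in the consistency step: every fractional power $[A^{(2)}]^{\pm\gamma}$, and the extra power $[A^{(2)}]^{1+\gamma}$ in the $B$-term, must be inserted so that each resulting operator norm is one of those controlled by \eqref{i400}, \textbf{(B3)}, \textbf{(B4)} or \textbf{(B6)}, and the interpolation-to-$E_N$ passage must be applied to the correctly weighted functions $[A_{l,0}]^{\gamma}\tilde{v}_l$ and $\partial\tilde{v}_l$ so that only the advertised logarithmic factor $\ln N$ is lost.
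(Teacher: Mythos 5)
Your reading of the display is literally correct --- \eqref{fs27nnnn} reproduces \eqref{fs27n} of Theorem~\ref{main0-0} verbatim (evidently an editorial slip in the paper), so under the stated hypotheses the displayed inequalities follow by mere citation of Theorem~\ref{main0-0}, and your stability-plus-consistency reconstruction (reduction of \eqref{pohm} via Lemma~\ref{l2}, the bounds \eqref{fs31}, \eqref{fs32}, the consistency estimate \eqref{ocpsi} with the operator bookkeeping through \eqref{i400}, \textbf{(B3)}, \textbf{(B4)}, \textbf{(B6)} and the Lebesgue inequality \eqref{fs32n}) is a faithful replay of that earlier proof, including the correct observation that measuring errors against $\tilde{v}_l$, restarted from the computed value $\tilde{v}_{l-1}(1)$, annihilates the terms $F_x^{(l)}z_{x,N}^{(l-1)}+F_y^{(l)}z_{y,N}^{(l-1)}$. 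But your remark that ``no summation over layers enters this statement'' is exactly where the proposal parts company with the paper: Theorem~\ref{main0} is announced as the \emph{main} result on the full errors $z=(z^{(1)},\dots,z^{(K)})$ and $z_\partial=(z_\partial^{(1)},\dots,z_\partial^{(K)})$ over all $K$ subintervals, and the paper's proof of it consists of nothing \emph{but} the layer-accumulation argument you discard; it does not revisit the single-layer analysis at all.

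Concretely, the paper first splits $\|v_l(s_k)-x_k^{(l)}\|\le\|v_l(s_k)-\tilde{v}_l(s_k)\|+\|\tilde{v}_l(s_k)-x_k^{(l)}\|$, where the first term is precisely the restart error your choice of reference solution removes from the single-layer system; the stability of the continuous problem on one subinterval gives the recursion $|\|z^{(l)}\||\le \mathrm{e}^{c\tau}|\|z^{(l-1)}\||+|\|z_x^{(l)}\||$, which unwinds to $|\|z^{(l)}\||\le\sum_{j=1}^{K}\mathrm{e}^{(K-j)c\tau}|\|z_x^{(j)}\||$ and similarly for $z_\partial$. The proof of Theorem~\ref{main0} then bounds $\mathrm{e}^{(K-j)c\tau}\le \mathrm{e}^{cK\tau}$ uniformly in $K$ (since $K\tau=T$), inserts the per-layer bounds \eqref{fs27n}, and extracts one factor $\tau/2$ per summand, arriving at the Gronwall-type global estimate
\[
|\|z\||\le c\sum_{j=1}^{K}\frac{\tau}{2}\left\{\left(\frac{\tau}{2}\right)^{1-\gamma}N^{1-\gamma}\ln{N}\,E_N([A_{j,0}]^{\gamma}\tilde{v}_j)+N^{1-1/q}\ln{N}\,E_N(\partial\tilde{v}_j)\right\},
\]
and the same for $|\|z_\partial\||$. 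So judged against the paper's own proof, your proposal has a genuine gap: the propagation and summation of the restart errors across the $K$ layers --- the single idea the proof contains --- is missing, while the part you do supply is a correct but redundant re-derivation of Theorem~\ref{main0-0}, which the hypotheses already put at your disposal.
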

\begin{proof}
From the estimates for $|\|z^{(l)}\| |$ and $|\|z_{\partial}^{(l)}\| |$ we have
\begin{equation}
\begin{split}
|\|z \| | & \le \sum_{j=1}^{K}{\rm e}^{K c\tau} | \|z_x^{(j)}\|| \le {\rm e}^{c}\sum_{j=1}^{K} c \left\{\left(\frac{\tau}{2}\right)^{2-\gamma}N^{1-\gamma}\ln{N} E_N([A_{j,0}]^{\gamma}\tilde{v}_j) + \frac{\tau }{2}N^{1-1/q} \ln{N} E_N(\partial \tilde{v}_j)\right\} \\
&\le c \sum_{j=1}^{K} \frac{\tau}{2} \left\{\left(\frac{\tau}{2}\right)^{1-\gamma}N^{1-\gamma}\ln{N} E_N([A_{j,0}]^{\gamma}\tilde{v}_j) + N^{1-1/q} \ln{N} E_N(\partial \tilde{v}_j)\right\}.
\end{split}
\end{equation}
The same is valid for $z_\partial$
\begin{equation}
\begin{split}
|\|z_\partial \| | \le c \sum_{j=1}^{K} \frac{\tau}{2} \left\{\left(\frac{\tau}{2}\right)^{1-\gamma}N^{1-\gamma}\ln{N} E_N([A_{j,0}]^{\gamma}\tilde{v}_j) + N^{1-1/q} \ln{N} E_N(\partial \tilde{v}_j)\right\}.
\end{split}
\end{equation}

\end{proof}

\section{Numerical example}\label{numex}

In this section we show that the algorithm (\ref{fs6}) possesses the
exponential convergence  with respect to the temporal
discretization parameter $n$ predicted by Theorem \ref{main0}. In
order to eliminate the influence of other errors (the spatial error,
the error of approximation of the operator exponential and of the
integrals in (\ref{fs7})) we calculate the coefficients of the
algorithm (\ref{fs6}) exactly using the computer algebra tool
Maple.

A special example of the problem from the class (\ref{in1}) is
\begin{equation}\label{1}
\begin{split}
& \frac{\partial u}{\partial t}=\frac{\partial^2 u}{\partial
x^2}+f(x,t), \\
&u(0,t)=0, \frac{\partial u(1,t)}{\partial x}+b(t)u(1,t)=g(t),\\
&u(x,0)=u_0(x),
\end{split}
\end{equation}
where the operator $A:D(A)\in X \to X, \; X=L_q(0,1)$ is defined
by
\begin{equation}\label{2}
\begin{split}
& D(A)=\{v \in W_q^2(0,1): v(0)=0\}, \\
&Av=-\frac{\partial^2 v}{\partial x^2},
\end{split}
\end{equation}
the operators  $\partial_1:D(A) \to Y,$ and $\partial_0(t):D(A) \to
Y,$ $Y=\R $   are defined by
\begin{equation}\label{3}
\begin{split}
&\partial_1u=\frac{\partial u(x,t)}{\partial x}\left |_{x=1} \right.,\\
&\partial_0(t)u=b(t)\cdot u(x,t)|_{x=1}
\end{split}
\end{equation}
and $g(t) \in L_q(0,T;Y)= L_q(0,T)$.

As it was shown in \cite{bgmv} the second boundary integral equation in \eqref{fs2-1} for the example problem (\ref{1})
takes the form
\begin{equation}\label{17}
\begin{split}
&b(t) u(1,t)=b(t) v(1,t)+b(t) \int_0^t \frac{\partial}{\partial
t}W_1(1,\lambda,t-\lambda)d \lambda \\
&=b(t) v(1,t)-b(t) \int_0^t K(t-\lambda)g(\lambda)d
\lambda+b(t)\int_0^t K(t-\lambda)b(\lambda)u(1,\lambda)d \lambda,
\end{split}
\end{equation}
with
\begin{equation}\label{18}
\begin{split}
K(t)&=2 \sum_{n=1}^\infty \mathrm{e}^{-[\pi (2 n-1)/2]^2t}.
\end{split}
\end{equation}
The first equation in \eqref{fs2-1} is presented as follows

\begin{equation}\label{17f}
\begin{split}
&u(x,t)= v(x,t) - \int_0^t K_1(t-\lambda,x)g(\lambda)d
\lambda + \int_0^t K_1(t-\lambda,x)b(\lambda)u(1,\lambda)d \lambda,
\end{split}
\end{equation}
with
\begin{equation}\label{18f}
\begin{split}
K_1(t,x)=2 \sum_{n=1}^\infty (-1)^{n+2}\mathrm{e}^{-[\pi (2 n-1)/2]^2t} \sin (\frac{\pi}{2}(2n-1)x),\\
v(x,t)= 2 \sum_{n=1}^\infty \mathrm{e}^{-[\pi (2 n-1)/2]^2t} \sin (\frac{\pi}{2}(2n-1)x) \int_0^1 u_0(\xi) \sin (\frac{\pi}{2}(2n-1)\xi)d\xi .
\end{split}
\end{equation}

\begin{remark} Note that in this particular case we can
represent the integrand through the kernel $K_1(t-\lambda,x)$ ( obviously that $K(t-\lambda)=K_1(t-\lambda,1)$)
analytically. In general case one can use the exponentially convergent algorithm for the operator exponential in \eqref{fs7} like the ones from \cite{gm5}, \cite{vas2}.
\end{remark}

Let us consider the particular case of the problem \eqref{1}, when 
\[
b(t)= \mathrm{e}^{-\frac{\pi^2}{2}t}, \quad g(t)=\mathrm{e}^{-3\frac{\pi^2}{4}t}, \quad u_0(x)=\sin(\frac{\pi}{2}x),
\]
with exact solution
\[
u(x,t)=\mathrm{e}^{-\frac{\pi^2}{4}t}\sin(\frac{\pi}{2}x).
\]

Further we use to the equations \eqref{17}, \eqref{17f} the  collocation method
\eqref{fs6-s}.

The results of computations are presented in the tables (\ref{tab:M2})-(\ref{tab:M8}). The first column indicates the collocation point $t_k,$ in the second column there are the errors of approximation in the boundary $z_{x,k}$ (i.e. for $u(1,t)$), in the third column there are the errors of approximation in the collocation points for $x=\frac{1}{2}.$

\begin{table*}[h]
    \begin{center}
        \begin{tabular}{|c|c||c|}
      \hline
      Point $t$ & $\varepsilon_1$ & $\varepsilon_2$ \\
      \hline
            .8535533905& .28334234e-2 &  .19383315e-2 \\
            \hline
            .1464466094& .139098462e-1 & .75016794e-2  \\
            \hline
        \end{tabular}
    \end{center}

    \caption{The error in the case $n=2,$  $T=1$}
    \label{tab:M2}
\end{table*}

\begin{table*}[h]
    \begin{center}
        \begin{tabular}{|c|c||c|}
      \hline
      Point $t$ & $\varepsilon_1$ & $\varepsilon_2$ \\
      \hline
            0.9619397662 & .36662211e-4 & .23005566e-4 \\
            \hline
            0.6913417161 & .34443339e-4 & .33521073e-4\\
            \hline
            0.3086582838 & .42572982e-3 & .25374395e-3 \\
            \hline
            0.0380602337 & .23840042e-3 & .22946416e-4 \\
            \hline
        \end{tabular}
    \end{center}

    \caption{The error in the case $n=4,$ $T=1$}
    \label{tab:M4}
\end{table*}

\begin{table*}[h]
    \begin{center}
        \begin{tabular}{|c|c||c|}
      \hline
      Point $t$ & $\varepsilon_1$ & $\varepsilon_2$ \\
      \hline
            0.9903926402 & .46943895e-9 & .14327614e-9 \\
            \hline
            0.9157348061 & .14308953e-9 & .22763917e-9\\
            \hline
             0.7777851165 & .1564038e-8 & .45939028e-9\\
            \hline
             0.5975451610 & .2439358e-8 & .13481294e-9\\
            \hline
             0.4024548389 & .9823893e-8 & .28128559e-8\\
            \hline
             0.2222148834 & .1794445e-7 & .27310794e-8\\
            \hline
             0.0842651938 & .3218373e-7 & .61284010e-8\\
            \hline
             0.0096073597 & .1009601e-7 & .30125449e-11\\
            \hline
        \end{tabular}
    \end{center}

    \caption{The error in the case $n=8,$ $T=1$}
    \label{tab:M8}
\end{table*}

{\bf Acknowledgment}. The authors would like to acknowledge the support provided by the German Research Foundation (Deutsche Forschungsgemeinschaft -- DFG).

\bibliographystyle{plain}
%\bibliography{bib-GMV}

\end{document}